\documentclass[a4paper,10pt]{article}
\usepackage[english]{babel}
\usepackage[utf8]{inputenc}
\usepackage{amssymb}
\usepackage{amsmath}
\usepackage{amsthm}
\usepackage{a4wide} 
\usepackage[colorlinks=true]{hyperref}

\newcommand{\Tr}{\mathrm{Tr}}
\newtheorem{theorem}{Theorem}[section]
\newtheorem{corollary}[theorem]{Corollary}
\newtheorem{lemma}[theorem]{Lemma}
\newtheorem{remark}{Remark}

\title{A matrix Burkholder-Davis-Gundy inequality}
\author{Tom Maitre} 

\begin{document}

\maketitle 

\begin{abstract} 
We prove an inequality for the spectral norm of matrix valued stochastic integrals. This inequality can be seen either as a non-commutative version of the\break Burkholder–Davis–Gundy inequality or as an extension of the non-commutative Khintchine inequality of Lust-Piquard to stochastic integrals. The proof relies on a version of Freedman's inequality for matrix valued martingales.
\end{abstract}

\section{Introduction and preliminaries}
The non-commutative Khintchine inequality, discovered by Lust-Piquard, asserts that there exists two universal constant $c$ and $C$ such that for a random variable $X$ of the form $\sum_{i=1}^{N}H_{i}\gamma_{i}$ where $\gamma_{i}$ are independent standard Gaussian variables and $H_{i}$ are deterministic symmetric matrices of size $n \times n$, \begin{equation} \label{K} c\left\lVert \sum_{i=1}^{N}H_{i}^{2}\right\rVert^{1/2} \le \mathbb{E}\lVert X\rVert \le C\sqrt{\log n}\left\lVert \sum_{i=1}^{N}H_{i}^{2}\right\rVert^{1/2}\end{equation}
where $\lVert A\rVert$ denotes the spectral norm of the symmetric matrix A. For further details on inequality, we refer to \cite{Lus91} or \cite{Pis03}, and \cite{Ban16} for an improved variant of this inequality. 

The Burkholder-Davis-Gundy's (BDG) inequality asserts that
\begin{equation}\label{BDG} c_{p}\left(\mathbb{E}\langle X \rangle_{t}^{p/2}\right)^{1/p} \le \mathbb{E}\sup_{0\le s\le t}\lvert X_{s}\rvert^{p} \le C_{p}\left(\mathbb{E}\langle X\rangle_{t}^{p/2}\right)^{1/p}\end{equation}
where $X$ is a continuous martingale, $\langle X \rangle$ is its quadratic variation, and $c_{p}, C_{p}$ are constants depending only on $p$. It is an important result in stochastic analysis that has numerous applications. Our main result of this paper generalizes both (\ref{K}) and (\ref{BDG}).

Let $(\Omega,\mathcal{F},\mathbb{P})$ be a probability space equipped with a filtration $(\mathcal{F}_{t})$ and \\$(B_t=(B^1_t, \dotsc,B^N_t))$ be a standard $N$-dimensional Brownian motion.
Throughout this paper, we consider an $(\mathcal{F}_{t})$-local martingale $X$ which can be written, for all $t \ge 0$, as 
\begin{equation} \label{eq_Xt}
X_{t} = \int_{0}^{t}\sum_{i=1}^{N}H_{i,s}dB_{s}^{i}
\end{equation} 
where $(H_{i,t})_{t}$ are progressively measurable processes starting from 0 and taking values in the space $n \times n$ symmetric matrices. For every matrix $A,$ we denote $\Tr(A)$ its trace. 
We assume that $(H_{i,s})_{s}$ satisfies $$\Tr\left(\int_{0}^{t}\sum_{i=1}^{N}H_{i,s}^{2}ds\right) < \infty,\quad \forall t > 0$$ almost surely, which ensures that the stochastic integral~(\ref{eq_Xt}) is well-defined. We define the quadratic variation of $X$,
denoted $\langle X\rangle$, as follows:
\[\langle X\rangle_{t} = \int_{0}^{t}\sum_{i=1}^{N}H_{i,s}^{2}ds , \quad \forall t\geq 0.\]
This process plays a significant role in our main theorem,
which we now state. 
\begin{theorem}\label{thm_main}
Let $(X_{t})$ be a stochastic process of the form~(\ref{eq_Xt}). Then, there exists a universal constant $C$ such that, for all $p \in \mathbb{N}^{*}$ and all $t \in \mathbb{R}_{+}^{*},$
\begin{equation}\label{eq_main} 
\left(\mathbb{E}\sup_{0\le s\le t}\lVert X_{s} \rVert^{p}\right)^{\frac{1}{p}} \le C\sqrt{p+\log n}\;\left(\mathbb{E}\lVert \langle X \rangle_{t}\rVert^{\frac{p}{2}}\right)^{\frac{1}{p}}.
\end{equation}
Our proof gives $C=12\sqrt{2\log(2)}$.
\end{theorem}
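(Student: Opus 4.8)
The plan is to deduce the moment inequality from a tail estimate of Freedman type together with a stopping-time (good-$\lambda$) argument. The engine is the continuous-time matrix Freedman inequality, which in the present setting I expect to read: for every martingale of the form~(\ref{eq_Xt}) and all $\epsilon,\sigma>0$,
\begin{equation*}
\mathbb{P}\left(\sup_{0\le s\le t}\lVert X_{s}\rVert\ge\epsilon,\ \lVert\langle X\rangle_{t}\rVert\le\sigma^{2}\right)\le 2n\,\exp\!\left(-\frac{\epsilon^{2}}{2\sigma^{2}}\right),
\end{equation*}
where the factor $2n$ comes from applying the one-sided bound for $\lambda_{\max}$ to both $X$ and $-X$ (since $\lVert X_s\rVert=\max(\lambda_{\max}(X_s),\lambda_{\max}(-X_s))$). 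This is the analogue, for continuous matrix martingales, of the supermartingale estimate built from $\Tr\exp(\theta X_{s}-\tfrac{\theta^{2}}{2}\langle X\rangle_{s})$, and establishing it cleanly (the matrix It\^o computation together with the Lieb-concavity step) is what I expect to be the technical heart; as it is exactly the ``version of Freedman's inequality'' announced in the abstract, I treat it here as the key input.

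Second, I would upgrade this bound, in which $\langle X\rangle$ is constrained, into a genuine tail comparison by a first-passage argument. Fix $\lambda>0$, set $\tau=\inf\{s:\lVert X_{s}\rVert>\lambda\}$, and write $Y=\sup_{0\le s\le t}\lVert X_{s}\rVert$ and $A=\lVert\langle X\rangle_{t}\rVert^{1/2}$. The crucial observation is that on $\{Y\ge\beta\lambda,\ A\le\delta\lambda\}$ one has $\tau<t$, the shifted process $\tilde X_{s}=X_{\tau+s}-X_{\tau}$ is again of the form~(\ref{eq_Xt}), and its quadratic variation obeys $\langle\tilde X\rangle_{t-\tau}=\langle X\rangle_{t}-\langle X\rangle_{\tau}\preceq\langle X\rangle_{t}$, whence $\lVert\langle\tilde X\rangle_{t-\tau}\rVert\le A^{2}\le\delta^{2}\lambda^{2}$; here I use the PSD-monotonicity of $\langle X\rangle$ and the fact that $0\preceq U\preceq V$ forces $\lVert U\rVert\le\lVert V\rVert$. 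Applying the conditional (strong-Markov) form of the Freedman bound to $\tilde X$ given $\mathcal{F}_{\tau}$ on $\{\tau<t\}$, and noting that $\sup_{\tau\le s\le t}\lVert X_{s}-X_{\tau}\rVert\ge(\beta-1)\lambda$ on that event, yields the good-$\lambda$ inequality
\begin{equation*}
\mathbb{P}(Y\ge\beta\lambda,\ A\le\delta\lambda)\le 2n\,\exp\!\left(-\frac{(\beta-1)^{2}}{2\delta^{2}}\right)\mathbb{P}(Y>\lambda).
\end{equation*}

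Third, I would integrate. Splitting $\mathbb{P}(Y\ge\beta\lambda)\le\mathbb{P}(Y\ge\beta\lambda,A\le\delta\lambda)+\mathbb{P}(A>\delta\lambda)$, multiplying by $p\lambda^{p-1}$ and integrating over $\lambda\in(0,\infty)$ gives, with $\gamma=2n\exp(-(\beta-1)^{2}/(2\delta^{2}))$,
\begin{equation*}
\beta^{-p}\,\mathbb{E}Y^{p}\le\gamma\,\mathbb{E}Y^{p}+\delta^{-p}\,\mathbb{E}A^{p},
\end{equation*}
so that $\mathbb{E}Y^{p}\le(\beta/\delta)^{p}(1-\gamma\beta^{p})^{-1}\,\mathbb{E}A^{p}$ as soon as $\gamma\beta^{p}<1$ (after a routine localization, e.g. stopping when $\lVert X\rVert$ first reaches $M$ and letting $M\to\infty$, to guarantee $\mathbb{E}Y^{p}<\infty$ a priori). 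Fixing $\beta$ (say $\beta=2$) and choosing $\delta$ to saturate $\gamma\beta^{p}=\tfrac12$ forces $(\beta-1)^{2}/(2\delta^{2})=\log(4n)+p\log\beta$, so that $1/\delta\asymp\sqrt{p+\log n}$; taking $p$-th roots then produces $C\sqrt{p+\log n}\,(\mathbb{E}\lVert\langle X\rangle_{t}\rVert^{p/2})^{1/p}$, the explicit value of $C$ (the stated $12\sqrt{2\log2}$) being a matter of bookkeeping once one checks that both regimes $p\lesssim\log n$ and $p\gtrsim\log n$ are absorbed into $\sqrt{p+\log n}$. Beyond the Freedman estimate itself, the main obstacle is the justification of the conditional step: verifying the strong-Markov applicability to $\tilde X$ and, above all, that stopping can only decrease $\lVert\langle X\rangle\rVert$. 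This PSD-monotonicity is precisely what replaces the useless $\lambda$-independent constant (that a naive scaling argument would give) by the factor $\mathbb{P}(Y>\lambda)$, making the layer-cake integral converge.
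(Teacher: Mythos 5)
Your proposal is correct and follows essentially the same route as the paper: the exponential supermartingale $\Tr\exp(\theta X_{s}-\tfrac{\theta^{2}}{2}\langle X\rangle_{s})$ yields the continuous-time matrix Freedman bound, a first-passage/strong-Markov argument with PSD-monotonicity of $\langle X\rangle$ upgrades it to the good-$\lambda$ inequality, and the layer-cake integration with $1/\delta\asymp\sqrt{p+\log n}$ finishes (the paper gets its factor $2n$ from the Hermitian dilation rather than from applying the one-sided bound to $\pm X$, an immaterial difference). Your explicit remark that one must localize to ensure $\mathbb{E}Y^{p}<\infty$ before rearranging is a point the paper passes over silently.
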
   

The $\log^{1/2}n$ correction is necessary in general, as we shall see below. We can compare Theorem~\ref{thm_main} with the Burlholder-Davis-Gundy's inequality. In fact, this theorem allows us to control the moments of the supremum of the spectral norm of our process by the spectral norm of its quadratic variation. Thus this extends the BDG inequality, in the case of symmetric matrices. 

Furthermore, if we take $p = 1$, we 
get  
\begin{equation}\label{eq_maincor} 
\mathbb{E}\sup_{0\le s\le t}\lVert X_{s}\rVert \lesssim \sqrt{\log n} \;
\left( \mathbb{E} \left\Vert \int_{0}^{t}\sum_{i=1}^{N}H_{i,s}^{2}ds \right\Vert^{1/2}\right).
\end{equation} 
where $a\lesssim b$ means there exists an universal constant $k$ such that $a \le kb.$ If we consider the special 
case where the matrices $(H_{i,t})$ are deterministic and constant over 
time, applying~(\ref{eq_maincor}) at $t=1$ yields in particular 
\[ 
\mathbb{E} \left\Vert \sum_{i=1} \gamma_i H_i \right\Vert
\lesssim \sqrt{\log n}\left\lVert \sum_{i=1}^{N}H_{i}^{2}\right\rVert^{1/2} , 
\] 
where the variables $\gamma_i$ are independent standard Gaussian variables. 
This is the non-commutative Khintchine inequality of Lust-Piquard, in 
the form put forward by Tropp~\cite{Tro16}. Note that the $\sqrt{\log n}$
factor is in general necessary (see e.g.~\cite{Tro16} for the details), 
which shows that also in~(\ref{eq_main}) the $\sqrt{ \log n}$ is needed. 

\noindent{}Noncommutative versions of the Burkholder–Davis–Gundy inequalities have been established in the context of free probability, notably in \cite{Bia98}, \cite{Jun03} and \cite{Pis97}. In \cite{Pis97}, a Khintchine-type inequality is obtained for sums of matrices with Bernoulli entries. In the continuous setting, Biane and Speicher \cite{Bia98} establish a noncommutative Khintchine-type inequality in the framework of free probability. Specializing this result to our setting yields the following inequality : $$\mathbb{E}\lVert X_{t}\rVert \le 2\sqrt{2}\;\mathbb{E} \left(\int_{0}^{t}\left\lVert\sum_{i=1}^{N}H_{i,s}\right\rVert^{2}\;ds\right)^{1/2}.$$ This result is substantially different from ours (\ref{eq_maincor}). Although the constant is universal, it comes at the cost of removing the supremum over time and introducing cross terms between the processes 
$(H_{i,t})$. The two results are not directly comparable, as neither implies the other.

Another result presented in our paper is an extension of the following inequality, due to Freedman \cite{Fre75}. If $(M_{t})_{t\ge 0}$ is a continuous real-valued local martingale starting from 0, then, for all $u > 0$ and $\sigma \in \mathbb{R},$ $$\mathbb{P}\left(\exists t > 0 , \;M_{t} \ge u\; \; \text{and}\; \; \langle M \rangle_{t} \le \sigma^{2}\right) \le \text{exp}\left(-\frac{u^{2}}{2\sigma^{2}}\right).$$ Tropp extended this result to the matrix-valued case, but only in  discrete time (see \cite{Tro11}). Our version of this inequality reads as follows

\begin{theorem}[Freedman's matrix inequality]\label{freedmann} Let $(X_{t})_{t\ge 0}$ be a stochastic process of the form $(\ref{eq_Xt})$.  Let $\sigma, u \in \mathbb{R}$. Then, \[\mathbb{P}\left(\exists t >0, \lambda_{\text{max}}(X_{t}) \ge u \;\;\text{and} \; \;\left\lVert \langle X \rangle_{t} \right\rVert \le \sigma^{2}\right) \le ne^{-\frac{u^{2}}{2\sigma^{2}}}.\]
\end{theorem}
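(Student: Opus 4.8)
The plan is to prove Theorem~\ref{freedmann} via the classical exponential supermartingale (Doléans--Dade) method, adapted to the matrix setting by means of the matrix exponential and the trace. The guiding idea, going back to Tropp's discrete treatment, is to convert the spectral-norm bound on $\lambda_{\max}(X_t)$ into a scalar statement about $\Tr\,e^{\theta X_t}$ for a parameter $\theta>0$, then build a genuine supermartingale out of this trace by subtracting the ``predictable'' drift coming from the quadratic variation.

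\medskip

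\noindent\textbf{Step 1: set up the matrix exponential martingale.} For a fixed $\theta>0$ I would consider the process $Y_t := \Tr\exp\!\bigl(\theta X_t - \tfrac{\theta^2}{2}\langle X\rangle_t\bigr)$. The first task is to show that $(Y_t)$ is a supermartingale. Applying the Itô formula to the matrix-valued semimartingale inside the exponential is delicate because the matrices $X_t$ and $\langle X\rangle_t$ need not commute, so $e^{\theta X_t}$ does not differentiate as in the scalar case. The clean way around this is to differentiate the \emph{trace}: the trace kills the first-order non-commutative corrections, so one has $d\,\Tr\,e^{\theta X_t}$ expressible through $\Tr\bigl(e^{\theta X_t}\,\theta\,dX_t\bigr)$ together with a second-order term. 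The crucial input here is the semidefinite bound $\Tr\bigl(e^{A}\,\mathrm{d}\langle \theta X\rangle\bigr)\preceq \Tr\bigl(e^{A}\bigr)\cdot(\dots)$ controlled by $\tfrac{\theta^2}{2}\sum_i H_{i,s}^2$; what makes this work is that for the trace one can use the operator-convexity/Lieb-type estimate $\Tr\,e^{A+\Delta}\le \Tr\bigl(e^{A}(I+\Delta+\tfrac12\Delta^2)\bigr)$ in the relevant infinitesimal form. Subtracting exactly the $\tfrac{\theta^2}{2}\langle X\rangle_t$ compensator makes the drift nonpositive, giving the supermartingale property.

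\medskip

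\noindent\textbf{Step 2: localize and run the optional stopping / maximal argument.} Define the stopping time $\tau := \inf\{t>0:\lambda_{\max}(X_t)\ge u\ \text{and}\ \|\langle X\rangle_t\|\le\sigma^2\}$ (more carefully, stop when $\lambda_{\max}$ first reaches $u$ while the quadratic variation is still below $\sigma^2$). On the event in question, at time $\tau$ we have $\lambda_{\max}(X_\tau)\ge u$, hence $\Tr\,e^{\theta X_\tau}\ge e^{\theta u}$, while $\langle X\rangle_\tau\preceq \sigma^2 I$ so $e^{-\frac{\theta^2}{2}\langle X\rangle_\tau}\succeq e^{-\frac{\theta^2}{2}\sigma^2}I$. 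Combining, on this event $Y_\tau\ge e^{\theta u-\frac{\theta^2}{2}\sigma^2}$. Since $Y$ is a supermartingale with $Y_0=\Tr\,I = n$, the maximal inequality for nonnegative supermartingales (applied after a standard localization to handle the merely local-martingale nature of $X$ and to ensure integrability) yields $\mathbb{P}(\sup_t Y_t\ge a)\le n/a$. Taking $a=e^{\theta u-\frac{\theta^2}{2}\sigma^2}$ gives the bound $n\,e^{-\theta u+\frac{\theta^2}{2}\sigma^2}$ on the probability of the event.

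\medskip

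\noindent\textbf{Step 3: optimize in $\theta$.} Minimizing $-\theta u+\tfrac{\theta^2}{2}\sigma^2$ over $\theta>0$ gives the optimal choice $\theta=u/\sigma^2$ and the exponent $-u^2/(2\sigma^2)$, producing the claimed bound $n\,e^{-u^2/(2\sigma^2)}$. I expect the main obstacle to be Step~1: rigorously establishing the supermartingale property in continuous time with non-commuting matrices, since the naive Itô expansion of $e^{\theta X_t}$ involves Fréchet derivatives of the exponential map that do not simplify, and one must argue at the level of the trace using the mean-value/Duhamel representation of $d\,\Tr\,e^{\theta X_t}$ together with a Lieb-type convexity inequality to dominate the diffusion correction by the subtracted $\tfrac{\theta^2}{2}d\langle X\rangle_t$ term. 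The localization needed to pass from a local martingale to an honest supermartingale with finite expectation, and the care required at the stopping time $\tau$ to interpret the event correctly, are the secondary technical points.
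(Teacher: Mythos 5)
Your proposal follows essentially the same route as the paper: both hinge on showing that $\Tr\exp\bigl(\theta X_t-\tfrac{\theta^2}{2}\langle X\rangle_t\bigr)$ is a nonnegative supermartingale via It\^o's formula together with the second-order trace inequality $\langle\nabla^2 \Tr e^M H;H\rangle\le\Tr(e^M H^2)$ (Lemma~\ref{lemma2}, quoted from Klartag--Lehec), and then stop the process and optimize in $\theta$ to obtain the Chernoff exponent $-u^2/(2\sigma^2)$ with prefactor $n=\Tr I_n$. The only cosmetic difference is that you apply the maximal inequality for nonnegative supermartingales at the stopping time, whereas the paper stops at an $\epsilon$-enlarged quadratic-variation threshold, uses a fixed-time Markov/Chernoff bound (Corollary~\ref{allow}), and lets $\epsilon\to 0$; both variants are valid.
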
   

To prove Theorem \ref{thm_main}, we use the so-called \textit{good $\lambda$-inequality} method. For applications of such inequalities to Burkholder-Davis-Gundy type results, see e.g. \cite{Ban98} or \cite[Chapter IV]{Rev99}.
In our setting, it takes the form of the following refinement of Theorem \ref{freedmann} :

\begin{theorem}\label{good}
    Let $(X_{t})_{t\ge 0}$ be a stochastic process of the form $(\ref{eq_Xt})$. Let $u, \sigma \in \mathbb{R}$. Then, \[\mathbb{P}\left(\exists t > 0,\; \lambda_{\text{max}}(X_{t}) \ge 2u \;\;\text{and}\;\;\lVert\langle X\rangle_{t}\rVert\le \sigma^{2}\right) \le ne^{-\frac{u^{2}}{2\sigma^{2}}}\mathbb{P}\left(\exists t > 0,\; \lambda_{\text{max}}(X_{t})\ge u\right).\]
\end{theorem}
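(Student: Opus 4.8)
The plan is to run a good-$\lambda$ argument anchored at the first passage of $\lambda_{\text{max}}(X)$ to level $u$, reducing everything to Freedman's matrix inequality (Theorem~\ref{freedmann}) applied to the increment of $X$ after that time. Set
$$A=\{\exists t>0:\ \lambda_{\text{max}}(X_t)\ge u\},\qquad D=\{\exists t>0:\ \lambda_{\text{max}}(X_t)\ge 2u\ \text{and}\ \|\langle X\rangle_t\|\le\sigma^2\},$$
and let $T=\inf\{t\ge 0:\ \lambda_{\text{max}}(X_t)\ge u\}$, so that $A=\{T<\infty\}$. I may assume $u,\sigma>0$, the remaining cases being trivial. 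Because $X$ has continuous paths and $\lambda_{\text{max}}$ is continuous on symmetric matrices, $t\mapsto\lambda_{\text{max}}(X_t)$ is continuous and vanishes at $t=0$; hence on $D$ the intermediate value theorem forces $T<\infty$ with $\lambda_{\text{max}}(X_T)=u$, so that $D\subseteq A$.

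First I would identify the post-$T$ increment as a process of the same type. Put $Y_s=X_{T+s}-X_T$ and $\mathcal G_s=\mathcal F_{T+s}$. By the strong Markov property, $W_s:=B_{T+s}-B_T$ is a $\mathcal G$-Brownian motion, and
$$Y_s=\int_0^s\sum_{i=1}^N H_{i,T+r}\,dW^i_r,$$
which is again of the form~(\ref{eq_Xt}): it starts from $0$, its integrands are $\mathcal G$-progressively measurable, and $\Tr\int_0^s\sum_i H_{i,T+r}^2\,dr\le\Tr\int_0^{T+s}\sum_i H_{i,r}^2\,dr<\infty$. Its quadratic variation is $\langle Y\rangle_s=\langle X\rangle_{T+s}-\langle X\rangle_T$.

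Next I would transfer the event $D$ onto $Y$ using two elementary matrix facts. On $D$ pick $t\ge T$ with $\lambda_{\text{max}}(X_t)\ge 2u$ and $\|\langle X\rangle_t\|\le\sigma^2$, and set $s=t-T>0$. Subadditivity $\lambda_{\text{max}}(M+M')\le\lambda_{\text{max}}(M)+\lambda_{\text{max}}(M')$ of the largest eigenvalue, together with $\lambda_{\text{max}}(X_T)=u$, yields $\lambda_{\text{max}}(Y_s)\ge\lambda_{\text{max}}(X_t)-u\ge u$. Moreover $0\preceq\langle Y\rangle_s=\int_T^t\sum_i H_{i,r}^2\,dr\preceq\langle X\rangle_t$, and since the spectral norm is monotone on the positive semidefinite cone, $\|\langle Y\rangle_s\|\le\|\langle X\rangle_t\|\le\sigma^2$. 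Hence $D\subseteq A\cap G$, where $G=\{\exists s>0:\ \lambda_{\text{max}}(Y_s)\ge u\ \text{and}\ \|\langle Y\rangle_s\|\le\sigma^2\}$ is precisely the Freedman event for $Y$.

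Finally I would condition on $\mathcal F_T$. Since $A\in\mathcal F_T$,
$$\mathbb P(D)\le\mathbb P(A\cap G)=\mathbb E\big[\mathbf 1_A\,\mathbb P(G\mid\mathcal F_T)\big].$$
The step I expect to be the main obstacle is the almost sure conditional bound $\mathbb P(G\mid\mathcal F_T)\le n e^{-u^2/2\sigma^2}$. The decisive feature here is that the right-hand side of Theorem~\ref{freedmann} is \emph{uniform}: it does not depend on the integrands. Since $W$ is independent of $\mathcal F_T$ by the strong Markov property, conditioning on $\mathcal F_T$ freezes the $\mathcal F_T$-measurable data while leaving a genuine stochastic integral of the form~(\ref{eq_Xt}) driven by $W$, to which Theorem~\ref{freedmann} applies with the very same bound. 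Turning this heuristic into a rigorous statement is the only genuinely delicate point and would be handled via regular conditional distributions, i.e.\ a conditional version of Freedman's inequality. Granting it,
$$\mathbb P(D)\le n e^{-u^2/2\sigma^2}\,\mathbb P(A),$$
which is exactly the assertion of the theorem.
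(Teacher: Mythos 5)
Your overall strategy is exactly the paper's: stop at the first passage $T$ of $\lambda_{\text{max}}(X)$ to level $u$, use subadditivity of $\lambda_{\text{max}}$ (Weyl's inequality) and monotonicity of the spectral norm on the positive semidefinite cone to transfer the event onto the increment $Y_s=X_{T+s}-X_T$, identify $\langle Y\rangle_s=\langle X\rangle_{T+s}-\langle X\rangle_T$, and conclude by a conditional form of Freedman's inequality given $\mathcal{F}_T$. This matches the paper's proof step for step.

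The one step you defer, namely the almost sure bound $\mathbb{P}(G\mid\mathcal{F}_T)\le n e^{-u^2/(2\sigma^2)}$, is however exactly where the substance lies, and the mechanism you sketch for it is not the right one. You propose to ``freeze the $\mathcal{F}_T$-measurable data'' using the independence of $W$ from $\mathcal{F}_T$; but the integrands $H_{i,T+s}$ are only progressively measurable with respect to $\mathcal{G}_s=\mathcal{F}_{T+s}$ and are in general neither $\mathcal{F}_T$-measurable nor functions of $\mathcal{F}_T$ and $W$ alone (the paper explicitly allows $(\mathcal{F}_t)$ to be strictly richer than the Brownian filtration), so conditioning on $\mathcal{F}_T$ does not reduce $G$ to an unconditional instance of Theorem \ref{freedmann} with frozen coefficients. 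The paper closes this gap differently and more robustly: the exponential process $\Tr(e^{\beta Y_s-\frac{\beta^2}{2}\langle Y\rangle_s})$ of Lemma \ref{lemma3} is a supermartingale for $(\mathcal{G}_s)$, so the Chernoff bound of Corollary \ref{allow} holds \emph{conditionally} on $\mathcal{G}_0=\mathcal{F}_T$ with the deterministic right-hand side $ne^{-u^2/(2\sigma^2)}$; this is precisely the content of (\ref{enfin}) in the remark following Theorem \ref{freedmann}. Substituting that conditional supermartingale bound for your regular-conditional-distribution argument completes your proof; as written, the deferred step is a genuine gap, because the route you indicate for it would not go through in the generality in which the theorem is stated.
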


Using different methods, we also prove an inequality for Schatten norms of matrices of the form (\ref{eq_Xt}). Recall that for all $p \ge 1$, the Schatten $p$-norm of a symmetric matrix $A$ is given by $\lVert A\rVert_{p}^{p} = \sum_{i=1}^{n}\lvert \lambda_{i}\rvert^{p}$ where $\lambda_{1},\dots, \lambda_{n}$ are the eigenvalues of $A$. This indeed defines a norm on the set of symmetric matrices.

\begin{theorem} \label{th_schatten}
Let $(X_{t})_{t\ge 0}$ be a stochastic process of the form given in $(\ref{eq_Xt})$, and $p \in \mathbb{N}^{*}$. Then,
\[\mathbb{E}\lVert X_{t}\rVert_{2p}^{2} \le (2p-1)\left(\mathbb{E}\int_{0}^{t}\left\lVert \sum_{i=1}^{N}H_{i,s}^{2}\right\rVert_{p}ds\right).\]
\end{theorem}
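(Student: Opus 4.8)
The plan is to control first the ordinary trace moment $\mathbb{E}\,\Tr(X_t^{2p})$ by Itô's formula, and only at the very end to pass to the Schatten norm via $\lVert X_t\rVert_{2p}^2=(\Tr(X_t^{2p}))^{1/p}$. Applying the matrix Itô formula to the polynomial $f(A)=\Tr(A^{2p})$ along $X_t$, the first-order term $2p\,\Tr(X_s^{2p-1}dX_s)$ is a local martingale, while the quadratic-variation term produces
\[Z_t:=\Tr(X_t^{2p})=\text{(local mart.)}+p\int_0^t\sum_{i=1}^N\sum_{m=0}^{2p-2}\Tr\!\left(X_s^{m}H_{i,s}X_s^{2p-2-m}H_{i,s}\right)ds,\]
using $D^2f(A)[H,H]=2p\sum_{m=0}^{2p-2}\Tr(A^mHA^{2p-2-m}H)$ together with $dB^i\,dB^j=\delta_{ij}\,ds$. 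Everything then hinges on estimating this drift integrand (the case $p=1$ is just the Itô isometry, with equality, so I assume $p\ge2$).

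The main obstacle, and the combinatorial heart of the proof, is the trace inequality
\[\sum_{m=0}^{2p-2}\Tr\!\left(A^mHA^{2p-2-m}H\right)\le (2p-1)\,\Tr\!\left(A^{2p-2}H^2\right)\]
for symmetric $A,H$. I would prove it by diagonalizing $A=U\,\mathrm{diag}(\lambda_k)\,U^{*}$ and setting $\tilde H=U^{*}HU$, which turns the left-hand side into $\sum_{k,l}\big(\sum_{m=0}^{2p-2}\lambda_k^{m}\lambda_l^{2p-2-m}\big)|\tilde H_{kl}|^2$. Since $2p-2$ is even, weighted AM--GM applied to $|\lambda_k|,|\lambda_l|$ gives $\lambda_k^m\lambda_l^{2p-2-m}\le |\lambda_k|^m|\lambda_l|^{2p-2-m}\le \tfrac{m}{2p-2}\lambda_k^{2p-2}+\tfrac{2p-2-m}{2p-2}\lambda_l^{2p-2}$, and summing over $m$ produces the symmetric bound $\sum_m\lambda_k^m\lambda_l^{2p-2-m}\le\frac{2p-1}{2}(\lambda_k^{2p-2}+\lambda_l^{2p-2})$; using $|\tilde H_{kl}|^2=|\tilde H_{lk}|^2$ collapses this to $(2p-1)\sum_{k}\lambda_k^{2p-2}(\tilde H^2)_{kk}=(2p-1)\Tr(A^{2p-2}H^2)$. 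The same computation shows the drift is nonnegative, since $\sum_m a^mb^{2p-2-m}=\frac{a^{2p-1}-b^{2p-1}}{a-b}\ge 0$. Combined with the matrix Hölder inequality $\Tr(A^{2p-2}K)\le\|A^{2p-2}\|_{p/(p-1)}\|K\|_p=(\Tr A^{2p})^{(p-1)/p}\|K\|_p$ applied to $K=\sum_i H_{i,s}^2\succeq0$, this bounds the drift of $Z_t$ by $p(2p-1)\,Z_s^{(p-1)/p}\,\|\sum_i H_{i,s}^2\|_p$.

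Now comes the step that produces the \emph{exact} constant $2p-1$. I would apply the one-dimensional Itô formula to $g(z)=z^{1/p}$ and the semimartingale $Z_t$, so that $\lVert X_t\rVert_{2p}^2=g(Z_t)$. Because $g''\le0$ the second-order correction is nonpositive and may be discarded, leaving $d\,g(Z_t)\le \tfrac1p Z_t^{1/p-1}\,dZ_t$. Feeding in the drift bound, the exponents cancel exactly, $\tfrac1p Z^{1/p-1}\cdot p(2p-1)Z^{(p-1)/p}=(2p-1)Z^{0}=(2p-1)$, so that $\lVert X_t\rVert_{2p}^2\le (\text{local mart.})+(2p-1)\int_0^t\|\sum_iH_{i,s}^2\|_p\,ds$, with vanishing initial value since $X_0=0$.

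Finally I would discharge the analytic technicalities. Since $g$ is singular at $0$, I would first run the argument with $g_\varepsilon(z)=(z+\varepsilon)^{1/p}$ (which is $C^2$ on $[0,\infty)$ and satisfies $g_\varepsilon'\le g'$), localize the local martingale by stopping times $T_k\uparrow\infty$ to kill its expectation, take expectations, and then let $k\to\infty$ by Fatou and $\varepsilon\downarrow0$ by monotone convergence, using $(\Tr X_t^{2p})^{1/p}=\lVert X_t\rVert_{2p}^2$. This yields $\mathbb{E}\lVert X_t\rVert_{2p}^2\le(2p-1)\,\mathbb{E}\int_0^t\|\sum_iH_{i,s}^2\|_p\,ds$, which is the claim (and one may assume the right-hand side finite, else there is nothing to prove). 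I expect the trace inequality above to be the only genuinely nonroutine ingredient; the Itô and localization steps are standard once the exponent bookkeeping is seen to close.
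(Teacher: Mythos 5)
Your proof is correct and follows essentially the same route as the paper: Itô's formula applied to $\Tr(X_t^{2p})^{1/p}$, the trace inequality $\sum_{m}\Tr(A^{m}HA^{2p-2-m}H)\le(2p-1)\Tr(A^{2p-2}H^{2})$, Schatten--Hölder, and localization. The only differences are organizational --- you split the Itô computation into the polynomial $\Tr(X_t^{2p})$ followed by the concave scalar map $z\mapsto z^{1/p}$ (whose discarded $g''$ term is exactly the paper's nonpositive Hessian term), and you supply a diagonalization/AM--GM proof of the trace inequality that the paper simply cites as Lemma~\ref{lemma1}; your $\varepsilon$-regularization also handles the non-smoothness of $z^{1/p}$ at $0$ more carefully than the paper does.
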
   

A classical approach to studying the spectral norm of a matrix is to use the well-known inequalities, valid for all $p \ge 1$ : \begin{equation}\label{schatten}\lVert A \rVert \le \lVert A \rVert_{p} \le n^{1/p}\lVert A \rVert\end{equation} where the upper bound is derived by bounding the trace as $n$ times the maximal eighenvalue. Using Theorem \ref{th_schatten} with $p \approx \log n$, Doob's maximal inequality and (\ref{schatten}), we see that Theorem \ref{th_schatten} yields the following 
\[\mathbb{E}\sup_{0\le s\le t}\lVert X_{s}\rVert \le C\sqrt{\log n}\left(\int_{0}^{t}\mathbb{E}\left\lVert \sum_{i=1}^{N}H_{i,s}^{2}\right\rVert ds\right)^{\frac{1}{2}}.\]
However, this approach yields a less precise inequality than Theorem \ref{thm_main}, since the square root lies outside both the expectation and the integral, and the spectral norm is inside the integral, in contrast with (\ref{eq_maincor}). 

This paper is organized as follows. First, we prove Theorem \ref{th_schatten}. The proof of this theorem uses Itô's lemma. In the second section we first present the proof of Theorem \ref{freedmann}, and then deduce Theorem \ref{good} . Finally, we use the latter to establish our main theorem.

\section{A bound for the Schatten norms}

In this section, we prove Theorem \ref{th_schatten}. Our proof is inspired by Tropp's proof of the matrix Khintchine inequality, which relies on the Gaussian integration by parts formula. In our context, integration by parts is replaced by the use of Itô's formula. For this, we require the following lemma.   

\begin{lemma}\label{lemma1}
Suppose that $H$ and $A$ are Hermitian matrices of the same size. Let $q$ and $r$ be integers that satisfy $0 \le q\le r$. Then, \[\Tr(HA^{q}HA^{r-q}) \le \Tr(H^{2}\lvert A\rvert^{r})\] where $\lvert A\rvert = (A^2)^{1/2}$ denotes the absolute value of $A$.
\end{lemma}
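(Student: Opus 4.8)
The plan is to reduce to the case where $A$ is diagonal, then expand both traces entrywise and compare them term by term via a weighted arithmetic–geometric mean inequality. First I would diagonalize: since $A$ is Hermitian, write $A = U\Lambda U^{*}$ with $U$ unitary and $\Lambda = \mathrm{diag}(\lambda_{1},\dots,\lambda_{n})$ where each $\lambda_{i}\in\mathbb{R}$. Setting $\tilde H = U^{*}HU$, which is again Hermitian, the cyclicity and unitary invariance of the trace give $\Tr(HA^{q}HA^{r-q}) = \Tr(\tilde H\Lambda^{q}\tilde H\Lambda^{r-q})$ and $\Tr(H^{2}\lvert A\rvert^{r}) = \Tr(\tilde H^{2}\lvert\Lambda\rvert^{r})$, where $\lvert\Lambda\rvert^{r} = \mathrm{diag}(\lvert\lambda_{1}\rvert^{r},\dots,\lvert\lambda_{n}\rvert^{r})$. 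So it suffices to treat the diagonal case.

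Writing $\tilde H = (h_{ij})$ and using the Hermitian relation $h_{ji}=\overline{h_{ij}}$, a direct computation of the matrix products gives
\[
\Tr(\tilde H\Lambda^{q}\tilde H\Lambda^{r-q}) = \sum_{i,j}\lvert h_{ij}\rvert^{2}\,\lambda_{i}^{r-q}\lambda_{j}^{q},
\qquad
\Tr(\tilde H^{2}\lvert\Lambda\rvert^{r}) = \sum_{i,j}\lvert h_{ij}\rvert^{2}\,\lvert\lambda_{i}\rvert^{r}.
\]
The key step is then to bound each summand on the left. Since $\lambda_{i}^{r-q}\lambda_{j}^{q}\le \lvert\lambda_{i}\rvert^{r-q}\lvert\lambda_{j}\rvert^{q}$, and by Young's inequality (weighted AM--GM with exponents $r/(r-q)$ and $r/q$),
\[
\lvert\lambda_{i}\rvert^{r-q}\lvert\lambda_{j}\rvert^{q} \le \frac{r-q}{r}\lvert\lambda_{i}\rvert^{r} + \frac{q}{r}\lvert\lambda_{j}\rvert^{r}.
\]
Summing against the nonnegative weights $\lvert h_{ij}\rvert^{2}$ and using the symmetry $\lvert h_{ij}\rvert^{2}=\lvert h_{ji}\rvert^{2}$ to rewrite $\sum_{i,j}\lvert h_{ij}\rvert^{2}\lvert\lambda_{j}\rvert^{r} = \sum_{i,j}\lvert h_{ij}\rvert^{2}\lvert\lambda_{i}\rvert^{r}$, the two weighted pieces recombine with total weight $\frac{r-q}{r}+\frac{q}{r}=1$, giving exactly $\sum_{i,j}\lvert h_{ij}\rvert^{2}\lvert\lambda_{i}\rvert^{r}$, which is the right-hand side.

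The main points requiring care are the sign issue when some eigenvalues are negative, which is exactly why passing to absolute values is essential in the first inequality above, and the degenerate cases $q=0$ or $q=r$, where one of the Young exponents is infinite. In those cases the term-by-term bound should instead be obtained directly from $\lambda_{i}^{r}\le\lvert\lambda_{i}\rvert^{r}$, which is consistent with the weighted estimate in the limit. Apart from this, the argument is a clean entrywise reduction, and the symmetrization step is the one place where the Hermitian structure of $H$ is genuinely used.
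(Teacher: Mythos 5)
Your proof is correct. Note that the paper does not actually prove this lemma; it defers to Section 7 of the cited Klartag--Lehec preprint, and the argument there is essentially the one you give: diagonalize $A$, expand both traces in the eigenbasis using the Hermitian symmetry $|h_{ij}|=|h_{ji}|$, and apply the weighted AM--GM bound $|\lambda_i|^{r-q}|\lambda_j|^{q}\le \frac{r-q}{r}|\lambda_i|^{r}+\frac{q}{r}|\lambda_j|^{r}$ together with $\lambda_i^{r-q}\lambda_j^{q}\le|\lambda_i|^{r-q}|\lambda_j|^{q}$. Your worry about the endpoint cases $q=0$ and $q=r$ is unnecessary, since the weighted AM--GM inequality $a^{\theta}b^{1-\theta}\le\theta a+(1-\theta)b$ holds for all $\theta\in[0,1]$ including the endpoints; the only genuinely degenerate case is $r=0$, where both sides equal $\Tr(H^2)$.
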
   
\noindent{}See e.g. \cite[section 7]{Kla24} for proof of this lemma. We are in a position to prove the main result of this section. 

\begin{proof}[Proof of Theorem \ref{th_schatten}] 
By Itô's lemma, for all sufficiently regular $f$, \begin{equation}\label{Ito}df(X_{t}) = \sum_{i=1}^{N}\langle \nabla f(X_{t}); H_{i,t}\rangle dB_{t}^{i} + \frac{1}{2}\sum_{i=1}^{N}\langle \nabla^{2}f(X_{t})H_{i,t} ; H_{i,t}\rangle dt,\end{equation} where $\nabla^{2}f$ is the Hessian of $f$, $\langle A;B\rangle = \Tr(AB)$ denotes the usual scalar product on the space of symmetric matrices, and $d$ represents the Itô derivative.\\
Now, consider $f(X) = \Tr(X^{2p})^{1/p}$, which is twice differentiable. For all Hermitian matrices $X, H$, we have \[\langle\nabla f(X); H\rangle = 2\Tr(X^{2p})^{\frac{1}{p}-1}\Tr(X^{2p-1}H),\]
and
\begin{align*} \left\langle \nabla^{2} f(X) H; H\right\rangle &= 2\Tr(X^{2p})^{\frac{1}{p}-1}\sum_{k=0}^{2p-2}\Tr(X^{k}HX^{2p-2-k}H) + 4p\left(\frac{1}{p}-1\right)\Tr(X^{2p-1}H)^{2}\Tr(X^{2p})^{\frac{1}{p}-2}\\
&\le 2(2p-1)\Tr(X^{2p})^{\frac{1}{p}-1}\Tr(X^{2p-2}H^{2})\end{align*} 
by Lemma \ref{lemma1}, using that $\lvert X^{2p-2}\rvert = (X^{4p-4})^{1/2} = X$, and nothing that the last term on the right-hand side of the equality is non-positive. Plugging this back into (\ref{Ito}) yields
\begin{equation}\label{avantholder} df(X_{t}) \le 2\Tr(X_{t}^{2p})^{\frac{1}{p}-1}\sum_{i=1}^{N} \Tr(X_{t}^{2p-1}H_{i,t}) dB_{t}^{i} + (2p-1)\lVert X_{t}\rVert_{2p}^{2(1-p)}\left\langle\sum_{i=1}^{N}H_{i,t}^{2};X_{t}^{2p-2}\right\rangle dt.\end{equation}
Applying Hölder's inequality for the Schatten norms (see e.g. \cite{Bar05}), we get
\[\left\langle \sum_{i=1}^{N}H_{i,t}^{2};X_{t}^{2p-2}\right\rangle \le \left\lVert X_{t}\right\rVert_{2p}^{2(p-1)}\left\lVert\sum_{i=1}^{N}H_{i,t}^{2}\right\rVert_{p}.\]
Plugging this back in (\ref{avantholder}), we obtain
\[ df(X_{t}) \le 2\Tr(X_{t}^{2p})^{\frac{1}{p}-1}\sum_{i=1}^{N} \Tr(X_{t}^{2p-1}H_{i,t}) dB_{t}^{i} + (2p-1)\left\lVert\sum_{i=1}^{N}H_{i,t}^{2}\right\rVert_{p}dt.
\]
 Since the first process on the right-hand side of the inequality is a local martingale, there exists a sequence of stopping times $(T_{m})_{m\ge 1}$ such that, for all $m$, this process stopped at $T_{m}$ is a martingale with zero expectation, and $T_{m} \uparrow \infty$. Applying expectation at time $t\wedge T_{m}$, we get
\[\mathbb{E}\lVert X_{t\wedge T_{m}}\rVert_{2p}^{2} \le (2p-1)\mathbb{E}\int_{0}^{t\wedge T_{m}}\left\lVert \sum_{i=1}^{N}H_{i,s}^{2}\right\rVert_{p}ds \le (2p-1)\mathbb{E}\int_{0}^{t}\left\lVert \sum_{i=1}^{N}H_{i,s}^{2}\right\rVert_{p} ds.\]
Letting $m$ tend to infinity and using Fatou's lemma yields the result.
\end{proof}

\begin{remark} If we consider $f = \lVert \cdot\rVert_{2p}^{2p}$, the same proof yields \[\left(\mathbb{E}\left\lVert X_{t} \right\rVert_{2p}^{2p}\right)^{1/p} \le (2p-1)\int_{0}^{t}\left(\mathbb{E}\left\lVert \sum_{i=1}^{N}H_{i,s}^{2}\right\rVert_{p}^{p}\right)^{1/p}ds.\] Nonetheless, the issue here is the placement of the power $p$. \end{remark}

In fact, the assumption that the matrices are symmetric is not essential. It can be removed by a standard trick, see e.g. \cite{Tro15}.  We denote $\mathcal{M}_{n_{1}\times n_{2}}$ the space of rectangular matrices of $n_{1}$ raws and $n_{2}$ columns.  
For $A \in \mathcal{M}_{n_{1}\times n_{2}}$, we apply the symmetric case to the Hermitian dilatation, namely the matrix $\mathcal{H}(A)$ given by \begin{equation}\label{matrix} \mathcal{H}(A) = \begin{pmatrix} 0 & A \\ A^{*} & 0\end{pmatrix}.\end{equation} 
Recall that for $A \in \mathcal{M}_{n_{1}\times n_{2}}$, the Schatten $p-$ norm of $A$ is defined as $\lVert A\rVert_{p}^{p} = \sum_{i=1}^{\min\{n_{1   },n_{2}\}} \lvert\sigma_{i}(A)\rvert^{p}$ where $\sigma_{1}(A),\dots, \sigma_{\min\{n_{1},n_{2}\}}(A)$ are the singular values of $A$. Moreover, $A^{*}$ denotes the conjugate transpose of $A.$

\begin{theorem}\label{noncarre} Let $p \ge 1$, and $(X_{t})$ be a process of the form $(\ref{eq_Xt})$ but taking values in $\mathcal{M}_{n_{1}\times n_{2}}$. Then 
\[\mathbb{E}\lVert X_{t}\rVert_{2p}^{2} \le 2^{-1/p}\sqrt{2p-1}\int_{0}^{t}\mathbb{E}\left(\left\lVert\sum_{i=1}^{N}H_{i,s}H_{i,s}^{*}\right\rVert_{p}^{p}+ \left\lVert\sum_{i=1}^{N}H_{i,s}^{*}H_{i,s}\right\rVert_{p}^{p}\right)^{1/p}ds\]
\end{theorem}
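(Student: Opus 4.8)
The plan is to reduce Theorem~\ref{noncarre} to the already-proved symmetric case (Theorem~\ref{th_schatten}) by applying it to the Hermitian dilation $\mathcal{H}(X_t)$. The key observation is that if $X_t = \int_0^t \sum_i H_{i,s}\,dB_s^i$ with $H_{i,s} \in \mathcal{M}_{n_1\times n_2}$, then by linearity of the stochastic integral and of $\mathcal{H}(\cdot)$, we have $\mathcal{H}(X_t) = \int_0^t \sum_i \mathcal{H}(H_{i,s})\,dB_s^i$, where each $\mathcal{H}(H_{i,s})$ is a symmetric (Hermitian) matrix of size $(n_1+n_2)\times(n_1+n_2)$. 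Thus $\mathcal{H}(X_t)$ is a process of the form~(\ref{eq_Xt}) in the symmetric setting, and Theorem~\ref{th_schatten} applies directly to it.

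The core computation is to relate the Schatten norms of $\mathcal{H}(A)$ to those of $A$. First I would record that the singular values of $\mathcal{H}(A)$ are exactly $\{\pm\sigma_i(A)\}$ together with zeros, so that $\lVert \mathcal{H}(A)\rVert_{2p}^{2p} = 2\sum_i \sigma_i(A)^{2p} = 2\lVert A\rVert_{2p}^{2p}$, giving $\lVert \mathcal{H}(A)\rVert_{2p}^2 = 2^{1/p}\lVert A\rVert_{2p}^2$. Next I would compute the square of $\mathcal{H}(H_{i,s})$:
\[
\mathcal{H}(H_{i,s})^2 = \begin{pmatrix} H_{i,s}H_{i,s}^{*} & 0 \\ 0 & H_{i,s}^{*}H_{i,s}\end{pmatrix},
\]
so that $\sum_i \mathcal{H}(H_{i,s})^2$ is block-diagonal with blocks $\sum_i H_{i,s}H_{i,s}^{*}$ and $\sum_i H_{i,s}^{*}H_{i,s}$. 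The Schatten $p$-norm of a block-diagonal matrix satisfies $\lVert \mathrm{diag}(P,Q)\rVert_p^p = \lVert P\rVert_p^p + \lVert Q\rVert_p^p$, hence
\[
\left\lVert \sum_i \mathcal{H}(H_{i,s})^2\right\rVert_p = \left(\left\lVert \sum_i H_{i,s}H_{i,s}^{*}\right\rVert_p^p + \left\lVert \sum_i H_{i,s}^{*}H_{i,s}\right\rVert_p^p\right)^{1/p}.
\]

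With these two identities in hand, I would apply Theorem~\ref{th_schatten} to $\mathcal{H}(X_t)$, obtaining $\mathbb{E}\lVert \mathcal{H}(X_t)\rVert_{2p}^2 \le (2p-1)\,\mathbb{E}\int_0^t \lVert\sum_i \mathcal{H}(H_{i,s})^2\rVert_p\,ds$, and then substitute the two norm computations to convert everything back to the rectangular quantities. Dividing through by $2^{1/p}$ (from the left-hand identity) should yield the stated bound with the factor $2^{-1/p}$; I would need to check whether the constant comes out as $(2p-1)$ or $\sqrt{2p-1}$ as claimed, which suggests the theorem may intend the averaged or normalized form and the square root warrants a careful recheck of how the constant propagates. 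I do not expect a serious obstacle: the only technical points are confirming the singular-value multiset of the dilation and the additivity of Schatten norms over orthogonal blocks, both standard. The main thing to verify carefully is the exact power and constant bookkeeping so that the final inequality matches the stated $2^{-1/p}\sqrt{2p-1}$.
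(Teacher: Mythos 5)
Your proposal follows exactly the paper's own proof: apply Theorem~\ref{th_schatten} to the Hermitian dilation $\mathcal{H}(X_t)$, use the block-diagonal identity $\mathcal{H}(A)^2 = \mathrm{diag}(AA^*, A^*A)$ together with the additivity of Schatten norms over orthogonal blocks, and convert back via $\lVert\mathcal{H}(X_t)\rVert_{2p}^2 = 2^{1/p}\lVert X_t\rVert_{2p}^2$. Your suspicion about the constant is well founded: this argument (the paper's included) yields $2^{-1/p}(2p-1)$, so the $\sqrt{2p-1}$ in the statement appears to be a misprint rather than something your proof is missing.
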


\begin{proof}
We observe that $\mathcal{H}(X_{t}) = \int_{0}^{t}\sum_{i=1}^{N}\mathcal{H}(H_{i,s})dB_{s}^{i}$. Applying Theorem \ref{th_schatten} to the self-adjoint matrix $\mathcal{H}(X_{t})$, we obtain
\begin{equation}\label{a}\mathbb{E}\lVert\mathcal{H}(X_{t})\rVert^{2}_{2p} \le (2p-1)\int_{0}^{t}\mathbb{E}\left\lVert\sum_{i=1}^{N}\mathcal{H}(H_{i,s})^{2}\right\rVert_{p}ds.\end{equation}
But, for every matrix $A$,
\begin{equation}\label{carre}\mathcal{H}(A)^{2} = \begin{pmatrix} AA^{*}&0\\0&A^{*}A\end{pmatrix}.\end{equation}
Thus, \begin{equation}\left\lVert \sum_{i=1}^{N}\mathcal{H}(H_{i,s})^{2}\right\rVert_{p} =\left(\left\lVert\sum_{i=1}^{N}H_{i,s}H_{i,s}^{*}\right\rVert_{p}^{p}+\left\lVert\sum_{i=1}^{N}H_{i,s}^{*}H_{i,s}\right\rVert_{p}^{p}\right)^{1/p}.\label{b}\end{equation}
On the other hand, note that 
\begin{equation}\label{c}\lVert\mathcal{H}(X_{t})\rVert_{2p}^{p} = \left(\lVert X_{t}^{*}X_{t}\rVert_{p}^{p} + \lVert X_{t}X_{t}^{*}\rVert_{p}^{p}\right)^{1/p} = 2^{1/p}\lVert X_{t}^{*}X_{t}\rVert_{p} = 2^{1/p}\lVert X_{t}\rVert_{2p}^{2}.\end{equation}
By combining (\ref{a}), (\ref{b}) and (\ref{c}), we obtain the desired result.
\end{proof}

This result is consistent with the non-symmetric version of matrix Khintchine inequality, see \cite{Lus91}.

\section{The matrix Freedman inequality}

In this section, we prove Theorem \ref{freedmann}. To proceed, we first introduce the following lemma:

\begin{lemma} \label{lemma2}
Let $f : M \mapsto \Tr e^{M}$ where $M$ is taking values in the space of symmetric matrices. Then, for all symmetric matrices $M, H$; \[ \langle \nabla^{2}f(M)H;H\rangle \le \langle \nabla f(M); H^{2}\rangle = \Tr(e^{M}H^{2})\] where $\nabla^{2}f(M)$ stands for the Hessian matrix of $f$ at $M$.
\end{lemma}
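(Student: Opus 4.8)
The plan is to compute the gradient and Hessian of $f(M) = \Tr e^{M}$ explicitly and then reduce the matrix inequality to a scalar one. For the gradient, the key fact is that the trace tames the non-commutativity: differentiating along $H$ gives $\frac{d}{dt}\big|_{t=0}\Tr e^{M+tH} = \Tr(e^{M}H)$, so that $\nabla f(M) = e^{M}$ and hence $\langle \nabla f(M); H^{2}\rangle = \Tr(e^{M}H^{2})$. This immediately yields the claimed equality on the right-hand side. For the Hessian, I would use the same identity once more together with Duhamel's formula $\frac{d}{dt}e^{M+tH} = \int_{0}^{1} e^{s(M+tH)}H e^{(1-s)(M+tH)}\,ds$. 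Since $\frac{d}{dt}\Tr e^{M+tH} = \Tr(e^{M+tH}H)$, differentiating a second time and setting $t=0$ gives
\[
\langle \nabla^{2} f(M)H; H\rangle = \int_{0}^{1}\Tr\!\left(e^{sM}H e^{(1-s)M}H\right) ds.
\]

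Next I would diagonalise $M$. Writing $M = \sum_{i}\lambda_{i}u_{i}u_{i}^{*}$ in an orthonormal eigenbasis and setting $H_{ij} = u_{i}^{*}Hu_{j}$ (so $H_{ij}=H_{ji}$ since $H$ is symmetric), a direct expansion gives $\Tr(e^{sM}H e^{(1-s)M}H) = \sum_{i,j} e^{s\lambda_{i}+(1-s)\lambda_{j}}H_{ij}^{2}$, hence
\[
\langle \nabla^{2} f(M)H; H\rangle = \sum_{i,j} H_{ij}^{2}\int_{0}^{1} e^{s\lambda_{i}+(1-s)\lambda_{j}}\,ds.
\]
On the other side, again in the eigenbasis, $\Tr(e^{M}H^{2}) = \sum_{i,j} e^{\lambda_{i}}H_{ij}^{2}$, which after symmetrising in $(i,j)$ equals $\sum_{i,j}\frac{e^{\lambda_{i}}+e^{\lambda_{j}}}{2}H_{ij}^{2}$. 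Since every coefficient $H_{ij}^{2}$ is non-negative, it suffices to prove the inequality term by term, i.e. for all real $\lambda_{i},\lambda_{j}$,
\[
\int_{0}^{1} e^{s\lambda_{i}+(1-s)\lambda_{j}}\,ds \le \frac{e^{\lambda_{i}}+e^{\lambda_{j}}}{2}.
\]

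This scalar inequality follows from the convexity of $s \mapsto e^{s\lambda_{i}+(1-s)\lambda_{j}}$ on $[0,1]$: the trapezoidal (Hermite--Hadamard) bound states that the integral of a convex function over an interval is at most the average of its endpoint values, and here the endpoint values are $e^{\lambda_{j}}$ and $e^{\lambda_{i}}$. Equivalently, this is the statement that the logarithmic mean of $e^{\lambda_{i}}$ and $e^{\lambda_{j}}$ is dominated by their arithmetic mean, which can also be checked by a one-variable calculus argument. The only genuinely delicate step is the Hessian computation via Duhamel's formula and its correct reduction to the entrywise form; once the integral representation is in hand, the diagonalisation and the convexity estimate are routine, and the non-negativity of the weights $H_{ij}^{2}$ is exactly what allows the passage from the scalar inequality back to the matrix statement.
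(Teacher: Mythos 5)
Your proof is correct and complete: the Duhamel representation of the second derivative, the reduction to the eigenbasis of $M$ with non-negative weights $H_{ij}^{2}$, and the Hermite--Hadamard (logarithmic mean $\le$ arithmetic mean) estimate all check out. The paper gives no proof of this lemma, deferring to Section 7 of \cite{Kla24}; your argument is essentially the standard one found there, so there is nothing to add.
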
   

The proof of Lemma \ref{lemma2} can be found in Section 7 of \cite{Kla24}. The key lemma is as follows:
\begin{lemma}\label{lemma3} Let $(X_{t})_{t\ge 0}$ be a local martingale of the form $(\ref{eq_Xt})$. Then, the process $$\left(\Tr(e^{X_{t}-\frac{1}{2}\langle X\rangle_{t}})\right)_{t\ge 0}$$ is a supermartingale.
\end{lemma}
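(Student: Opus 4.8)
The plan is to apply Itô's formula to the matrix-valued semimartingale $M_t := X_t - \frac{1}{2}\langle X\rangle_t$ and check directly that the resulting drift is nonpositive. Writing $F(M) = \Tr e^{M}$, I would first record that its gradient on the space of symmetric matrices is $\nabla F(M) = e^{M}$, so that $\langle \nabla F(M); H\rangle = \Tr(e^{M}H)$ for every symmetric $H$. Since $\langle X\rangle_t$ is a finite-variation process, $M_t$ is a semimartingale whose martingale part coincides with that of $X_t$; in particular its matrix quadratic variation is governed solely by the Brownian part $\sum_i H_{i,t}\,dB_t^i$, while the drift $-\frac12\langle X\rangle_t = -\frac12\int_0^t\sum_i H_{i,s}^2\,ds$ contributes no second-order term.

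Next I would expand $dF(M_t)$ using the matrix Itô formula in the form (\ref{Ito}), letting the finite-variation part of $M_t$ enter only through the first-order term. This gives
\begin{align*}
dF(M_t) &= \sum_i \langle \nabla F(M_t); H_{i,t}\rangle\,dB_t^i \\
&\quad -\frac12\sum_i \langle \nabla F(M_t); H_{i,t}^2\rangle\,dt + \frac12\sum_i \langle \nabla^2 F(M_t) H_{i,t}; H_{i,t}\rangle\,dt.
\end{align*}
The first line is a local martingale. Using $\langle \nabla F(M_t); H_{i,t}^2\rangle = \Tr(e^{M_t}H_{i,t}^2)$ together with Lemma \ref{lemma2}, the two drift terms combine into
\[
\frac12\sum_i\left(\langle \nabla^2 F(M_t) H_{i,t}; H_{i,t}\rangle - \Tr(e^{M_t}H_{i,t}^2)\right)dt \le 0,
\]
so $F(M_t)$ is a local supermartingale.

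Finally, since $e^{M_t}$ is positive definite we have $F(M_t) = \Tr e^{M_t} > 0$, so the process is a \emph{nonnegative} local supermartingale. I would upgrade this to a genuine supermartingale by localization: choosing stopping times $T_m \uparrow \infty$ that reduce the local-martingale part, each stopped process $F(M_{t\wedge T_m})$ is a true supermartingale, and the conditional Fatou lemma applied to the nonnegative limit yields $\mathbb{E}[F(M_t)\mid \mathcal{F}_s] \le F(M_s)$ for $s\le t$.

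The main obstacle is the bookkeeping in the Itô expansion: formula (\ref{Ito}) is stated for $f(X_t)$ with $X$ a pure stochastic integral, so I must justify carefully that inserting the finite-variation drift $-\frac12\langle X\rangle_t$ inside the exponential produces exactly the extra first-order term $-\frac12\sum_i \Tr(e^{M_t}H_{i,t}^2)\,dt$ and no new second-order contribution, which follows because the finite-variation part has zero quadratic variation and zero covariation with the Brownian part. Once this is settled, Lemma \ref{lemma2} supplies all the analytic content and the nonpositivity of the drift is immediate.
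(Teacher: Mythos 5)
Your proposal is correct and follows essentially the same route as the paper: apply Itô's formula to $Y_t = X_t - \tfrac12\langle X\rangle_t$, use Lemma \ref{lemma2} to cancel the Hessian term against the drift $-\tfrac12\sum_i \Tr(e^{Y_t}H_{i,t}^2)\,dt$, and conclude via Fatou that the nonnegative local supermartingale is a genuine supermartingale. Your extra care in justifying that the finite-variation part contributes no second-order term is a sound (and implicit in the paper) bookkeeping step.
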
 
\begin{proof} Consider the process $Y_{t} = X_{t} - \frac{1}{2}\langle X \rangle_{t}$ and let $f : A \rightarrow \Tr(e^{A})$ as defined in the previous lemma. By Itô's lemma, and Lemma \ref{lemma2}, we have  
\begin{align*} df(Y_{t}) &= \sum_{i=1}^{N}\left[\langle \nabla f(Y_{t});H_{i,t}\rangle dB_{t}^{i} - \frac{1}{2}\langle \nabla f(Y_{t});H_{i,t}^{2}\rangle dt\right] + \frac{1}{2}\sum_{i=1}^{N}\langle\nabla^{2}f(Y_{t})H_{i,t};H_{i,t}\rangle dt\\
 &\le \Tr\left(e^{Y_{t}}\left(\sum_{i=1}^{N}H_{i,t}dB_{t}^{i} - \frac{1}{2}\sum_{i=1}^{N}H_{i,t}^{2}dt\right)\right) + \frac{1}{2}\Tr\left(e^{Y_{t}}\sum_{i=1}^{N}H_{i,t}^2 dt\right)\\
&= \sum_{i=1}^{N}\Tr\left(e^{Y_{t}}H_{i,t}\right)dB_{t}^{i}.
\end{align*}
Therefore the nonnegative process $\left(\Tr\left(e^{Y_{t}}\right)\right)_{t\ge 0}$ is the sum of a local martingale, and a decreasing adapted process. Fatou's lemma ensures that it is a supermartingale.
\end{proof}

\begin{corollary}\label{allow} Let $(X_{t})$ be a stochastic process as defined in $(\ref{eq_Xt})$. Let $\sigma, u \in \mathbb{R}$ and $t>0$. If \[\mathbb{P}\left(\left\lVert \langle X \rangle_{t} \right\rVert \le \sigma^{2}\right) = 1.\] Then, \begin{equation}\label{amodifier}\mathbb{P}\left(\lambda_{\text{max}}(X_{t}) \ge u\right) \le ne^{-\frac{u^{2}}{2\sigma^{2}}}.\end{equation}
\end{corollary}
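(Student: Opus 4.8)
The plan is to apply the exponential supermartingale from Lemma~\ref{lemma3} together with a Chernoff-type argument, exploiting the fact that the spectral norm control $\lVert\langle X\rangle_t\rVert\le\sigma^2$ holds almost surely. First I would introduce a scaling parameter $\theta>0$ and consider the rescaled process $(\theta X_s)_{s\ge 0}$, which is again of the form~(\ref{eq_Xt}) with integrands $\theta H_{i,s}$, so that its quadratic variation is $\theta^2\langle X\rangle$. By Lemma~\ref{lemma3}, the process $\bigl(\Tr\,e^{\theta X_s-\frac{\theta^2}{2}\langle X\rangle_s}\bigr)_{s\ge 0}$ is a nonnegative supermartingale starting at $\Tr\,e^{0}=n$, hence $\mathbb{E}\,\Tr\,e^{\theta X_t-\frac{\theta^2}{2}\langle X\rangle_t}\le n$.

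Next I would convert the event $\{\lambda_{\max}(X_t)\ge u\}$ into a bound on the trace exponential. The key observations are that $\lambda_{\max}(e^{\theta X_t})=e^{\theta\lambda_{\max}(X_t)}$ since $\theta>0$, and that for a symmetric positive matrix the largest eigenvalue is dominated by the trace, i.e. $\lambda_{\max}(e^{\theta X_t})\le \Tr\,e^{\theta X_t}$. On the event in question we thus have $e^{\theta u}\le e^{\theta\lambda_{\max}(X_t)}\le \Tr\,e^{\theta X_t}$. To bring in the quadratic variation factor, I use the almost-sure bound $\lVert\langle X\rangle_t\rVert\le\sigma^2$, which gives $\langle X\rangle_t\preceq\sigma^2 I$ in the PSD order, so $e^{-\frac{\theta^2}{2}\langle X\rangle_t}\succeq e^{-\frac{\theta^2}{2}\sigma^2}I$. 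Combining, on the event $\{\lambda_{\max}(X_t)\ge u\}$ we obtain $\Tr\,e^{\theta X_t-\frac{\theta^2}{2}\langle X\rangle_t}\ge e^{-\frac{\theta^2\sigma^2}{2}}\Tr\,e^{\theta X_t}\ge e^{\theta u-\frac{\theta^2\sigma^2}{2}}$.

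With this pointwise lower bound on the event, Markov's inequality applied to the nonnegative random variable $\Tr\,e^{\theta X_t-\frac{\theta^2}{2}\langle X\rangle_t}$ yields
\[
\mathbb{P}\bigl(\lambda_{\max}(X_t)\ge u\bigr)\,e^{\theta u-\frac{\theta^2\sigma^2}{2}}
\le \mathbb{E}\,\Tr\,e^{\theta X_t-\frac{\theta^2}{2}\langle X\rangle_t}\le n,
\]
so that $\mathbb{P}(\lambda_{\max}(X_t)\ge u)\le n\,e^{-\theta u+\frac{\theta^2\sigma^2}{2}}$. I would then optimize over $\theta$, choosing $\theta=u/\sigma^2$, which minimizes the exponent and gives exactly $n\,e^{-u^2/(2\sigma^2)}$, as claimed.

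The main subtlety to handle carefully is the interplay between the matrix order and the scalar Chernoff bound: the step $e^{-\frac{\theta^2}{2}\langle X\rangle_t}\succeq e^{-\frac{\theta^2\sigma^2}{2}}I$ and then combining it with $e^{\theta X_t}$ inside a trace requires that both factors be simultaneously controlled, which works because $\Tr(AB)\ge\lambda_{\min}(B)\Tr(A)$ for symmetric $A\succeq 0$ and $B\succeq 0$; here $A=e^{\theta X_t}\succeq 0$ and $B=e^{-\frac{\theta^2}{2}\langle X\rangle_t}\succeq e^{-\frac{\theta^2\sigma^2}{2}}I$. One should note that $X_t$ and $\langle X\rangle_t$ need not commute, so one cannot merge the two exponentials into a single one; the argument instead treats the trace of the product directly, using only positivity and the scalar eigenvalue bound on the second factor. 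The factor $n$ in the final bound is precisely the price paid for passing from $\lambda_{\max}$ to the trace at the start.
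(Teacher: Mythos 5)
Your overall strategy is exactly the paper's: apply the supermartingale of Lemma~\ref{lemma3} to the rescaled process $\theta X$, lower-bound the trace exponential on the event $\{\lambda_{\max}(X_t)\ge u\}$ using $\langle X\rangle_t\preccurlyeq\sigma^2 I_n$, apply Markov's inequality, and optimize at $\theta=u/\sigma^2$. The conclusion and all but one step are fine. The flaw is in your justification of the key inequality $\Tr e^{\theta X_t-\frac{\theta^2}{2}\langle X\rangle_t}\ge e^{-\theta^2\sigma^2/2}\,\Tr e^{\theta X_t}$. You invoke $\Tr(AB)\ge\lambda_{\min}(B)\Tr(A)$ with $A=e^{\theta X_t}$ and $B=e^{-\frac{\theta^2}{2}\langle X\rangle_t}$, i.e.\ you bound the trace of the \emph{product} of the two exponentials. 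But the quantity the supermartingale controls is $\Tr e^{\theta X_t-\frac{\theta^2}{2}\langle X\rangle_t}$, the trace of the exponential of the \emph{sum}, and since $X_t$ and $\langle X\rangle_t$ need not commute (as you yourself note) these are different objects. Worse, the Golden--Thompson inequality gives $\Tr e^{A+B}\le\Tr(e^Ae^B)$, so a lower bound on $\Tr(e^Ae^B)$ tells you nothing about $\Tr e^{A+B}$: the comparison runs in the unusable direction. As written, this step does not establish what you need.

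The inequality itself is true, and the repair is precisely what the paper does: the map $M\mapsto\Tr e^{M}$ is monotone for the Loewner order (Weyl's monotonicity of eigenvalues), and almost surely $\theta X_t-\frac{\theta^2}{2}\langle X\rangle_t\succcurlyeq\theta X_t-\frac{\theta^2\sigma^2}{2}I_n$ because $\langle X\rangle_t\preccurlyeq\sigma^2 I_n$. Hence $\Tr e^{\theta X_t-\frac{\theta^2}{2}\langle X\rangle_t}\ge\Tr e^{\theta X_t-\frac{\theta^2\sigma^2}{2}I_n}=e^{-\theta^2\sigma^2/2}\,\Tr e^{\theta X_t}$, where the factorization in the last step is legitimate only because $I_n$ commutes with everything. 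With that substitution, your argument coincides with the paper's proof of Corollary~\ref{allow}.
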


\begin{proof}
We almost surely have $\langle X \rangle_{t} \preccurlyeq \sigma^{2}I_{n}$, where $A\preccurlyeq B$ means that the matrix $B-A$ is a positive semi-definite symmetric matrix. Fix $\beta \ge 0.$ Then 
\[
	\Tr\left(e^{\beta X_{t} - \frac{\beta^{2}}{2}\langle X \rangle_{t}}\right) \ge \Tr\left(e^{\beta X_{t} - \beta^{2}\frac{\sigma^{2}}{2}I_{n}}\right) = \Tr(e^{\beta X_{t}})e^{-\frac{\beta^{2}\sigma^{2}}{2}} \ge e^{\beta\lambda_{\text{max}}(X_{t}) - \frac{\beta^{2}\sigma^{2}}{2}}.\]
Combining with Lemma \ref{lemma3}, we obtain
\[\mathbb{E}[e^{\beta\lambda_{\text{max}}(X_{t})}]\le e^{\frac{\beta^{2}\sigma^{2}}{2}}\mathbb{E}\Tr\left(e^{\beta X_{0}-\frac{\beta^{2}\langle X\rangle_{0}}{2}}\right) = ne^{\frac{\beta^{2}\sigma^{2}}{2}}.\]
We conclude by Chernoff's inequality.
\end{proof}

\begin{proof}[Proof of Theorem \ref{freedmann}] Let $\epsilon > 0$ and define the stopping time \[\tau = \inf\{t > 0, \; \lambda_{\text{max}}(X_{t}) \ge u \; \; \text{or} \; \; \lVert\langle X\rangle_{t}\rVert \ge \sigma^{2} + \epsilon\}.\]
Let $X^{\tau}$ be the process $X$ stopped at time $\tau$, namely $X^{\tau} = X_{t \wedge \tau}$. 
We observe that $\langle X^{\tau} \rangle_{t} = \langle X \rangle_{t\wedge \tau}$. By definition of $\tau$ and by continuity of $t \rightarrow \langle X \rangle_{t}$, we have \[\lVert\langle X_{t}^{\tau}\rangle\rVert \le \sigma^{2}+\epsilon\] almost surely. Then by Corollary \ref{allow},
\begin{equation}\label{diffusion}\mathbb{P}\left(\lambda_{\text{max}}(X_{t}^{\tau}) \ge u\right) \le ne^{-\frac{u^{2}}{2(\sigma^{2}+\epsilon)}}.\end{equation} 
Now, if there exists $s \le t$ such that $\lambda_{\text{max}}(X_{s}) \ge u$ and $\lVert\langle X\rangle_{s}\rVert \le \sigma^{2}$, then $\tau \le s \le t$ and
\[\lVert\langle X\rangle_{\tau}\rVert \le \lVert\langle X\rangle_{s}\rVert \le \sigma^{2} < \sigma^{2}+\epsilon.\]
Then, by definition of $\tau$, we must have \[\lambda_{\text{max}}(X_{\tau}) = \lambda_{\text{max}}(X_{t\wedge\tau}) \ge u.\]
Combining with (\ref{diffusion}), we get \begin{align*}\mathbb{P}(\exists   s \le t,\; \lambda_{\text{max}}(X_{s}) \ge u \;  \text{and}  \; \lVert\langle X \rangle_{s}\rVert \le \sigma^{2}) 
 &\le \mathbb{P}(\lambda_{\text{max}}(X_{t}^{\tau}) \ge u) \\  &\le  ne^{-\frac{u^{2}}{2(\sigma+\epsilon)^{2}}}.\end{align*}
We obtain Theorem \ref{freedmann} by letting $\epsilon$ tend to 0 and $t$ to $\infty$ and using monotone convergence. 
\end{proof}

\begin{remark}
    We do not insist that $(\mathcal{F}_{t})$ is the natural filtration of the Brownian motion, $(\mathcal{F}_{t})$ could be richer. The proof of Corollary \ref{allow} actually shows that $$\mathbb{P}(\lambda_{\text{max}}(X_{t})\ge u \lvert\mathcal{F}_{0}) \le e^{-\frac{u^{2}}{2\sigma^{2}}}$$ almost surely, which is slightly stronger than $(\ref{amodifier})$. In the same way, under the assumptions of Theorem \ref{freedmann}, the conclusion of Theorem \ref{freedmann} can be replaced by \begin{equation}\label{enfin}
        \mathbb{P}(\exists t > 0, \; \lambda_{\text{max}}(X_{t}) \ge u \;\text{and}\; \lVert\langle X\rangle_{t}\rVert \le \sigma^{2} \mid \mathcal{F}_{0}) \le ne^{-\frac{u^{2}}{2\sigma^{2}}}\quad a.s.
    \end{equation} 
\end{remark}

We now prove Theorem \ref{good}.

\begin{proof}[Proof of Theorem \ref{good}] Let $\tau := \inf\{t>0,\; \lambda_{\text{max}}(X_{t})>u\}$. Since $(X_{t})$ takes values in symmetric matrices, by Weyl's inequality (see, e.g., \cite[Chapter 3]{Bha97}),then we have\begin{align}\lambda_{\text{max}}(X_{\tau +t}) &\le \lambda_{\text{max}}(X_{\tau}) + \lambda_{\text{max}}(X_{\tau +t}-X_{\tau})\nonumber\\ \label{numero1}&= u+\lambda_{\text{max}}(X_{\tau +t}-X_{\tau}).\end{align} Moreover, since the matrix $\langle X\rangle_{\tau}$ is positive, we also have \begin{equation}\label{numero2}
    \lVert \langle X\rangle_{\tau +t}\rVert \ge \lVert \langle X\rangle_{\tau +t}-\langle X\rangle_{\tau}\rVert.
\end{equation} Set $\widetilde{X}_{t} = X_{\tau +t}-X_{\tau}$, $\widetilde{H_{i,t}} = H_{i,\tau +t}$, $\widetilde{\mathcal{F}_{t}} = \mathcal{F}_{\tau +t}$ and $\widetilde{B}_{t} = B_{\tau +t}-B_{\tau}$. By the strong Markov property of Brownian motion, $(\widetilde{B}_{t})$ is a Brownian motion with respect to $(\widetilde{\mathcal{F}}_{t})$. Moreover $(\widetilde{H}_{i,t})$ is progressively measurable with respect to $(\widetilde{\mathcal{F}}_{t})$ and $$\widetilde{X}_{t} = \int_{\tau}^{\tau +t}\sum_{i=1}^{N}H_{i,s}dB_{s}^{i}  =\int_{0}^{t}\sum_{i=1}^{N}\widetilde{H}_{i,s}d\widetilde{B}_{s}^{i}.$$ In particular, $$\langle\widetilde{X}\rangle_{t} = \int_{0}^{t}\sum_{i=1}^{N}\widetilde{H}_{i,s}^{2}ds = \langle X\rangle_{\tau +t}-\langle X\rangle_{\tau}.$$ From $(\ref{numero1})$ and $(\ref{numero2})$, we get 
\begin{align*}
    &\mathbb{P}(\exists t>0,\lambda_{\text{max}}(X_{t})\ge 2u \; \text{and}\; \lVert\langle X\rangle_{t}\rVert \le \sigma^{2}) \\ &= \mathbb{P}(\tau < \infty, \;\exists t>0; \lambda_{\text{max}}(X_{\tau+t})\ge 2u \;\text{and}\;\lVert\langle X\rangle_{\tau +t}\rVert \le \sigma^{2}) \\ &\le \mathbb{P}(\tau < \infty, \; \exists t>0, \lambda_{\text{max}}(\widetilde{X}_{t}) \ge u \; \text{and}\; \lVert\langle \widetilde{X}\rangle_{t}\rVert \le \sigma^{2}) \\ &= \mathbb{E}\left[\mathbb{1}_{\{\tau < \infty\}}\mathbb{P}(\exists t > 0, \lambda_{\text{max}}(\widetilde{X}_{t})\ge u \;\text{and}\;\lVert\langle \widetilde{X}\rangle_{t}\rVert \le \sigma^{2} \mid \mathcal{F}_{\tau})\right]
\end{align*}
But $\mathcal{F}_{\tau} = \tilde{\mathcal{F}}_{0}$ and from (\ref{enfin}), we get $$\mathbb{P}(\exists t>0,\lambda_{\text{max}}(X_{t})\ge 2u \; \text{and}\; \lVert\langle X\rangle_{t}\rVert \le \sigma^{2}) \le ne^{-\frac{u^{2}}{2\sigma^{2}}}\mathbb{P}(\tau < \infty)$$
which allows us to conclude by the definition of $\tau$.
\end{proof}

As before, we can extend Theorem \ref{good} to non-symmetric and rectangular matrices. We get the following :
\begin{theorem} 
Consider $(X_{t})$ of the form $(\ref{eq_Xt})$ but taking values in $\mathcal{M}_{n_{1}\times n_{2}}$ the space of rectangular matrices of $n_{1}$ rows and $n_{2}$ columns. Then,
\[\mathbb{P}\left(\exists t> 0, \; \lVert X_{t}\rVert \ge 2u \; \; \text{and}\;\; \Lambda_{t} \le \sigma^{2}\right) \le (n_{1}+n_{2})e^{-u^{2}/(2\sigma^{2})}\mathbb{P}(\exists t>0,\; \lVert X_{t}\rVert \ge u)\]
where \begin{equation}\label{lambda}\Lambda_{t} = \max\left\{\left\lVert\int_{0}^{t}\sum_{i=1}^{N}H_{i,s}H_{i,s}^{*}ds\right\rVert ;\left\lVert\int_{0}^{t}\sum_{i=1}^{N}H_{i,s}^{*}H_{i,s}ds\right\rVert\right\}.\end{equation}
\end{theorem}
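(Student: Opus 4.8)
The plan is to reduce the rectangular case to the symmetric version of Theorem~\ref{good} via the Hermitian dilation, exactly as in the proofs of Theorems~\ref{noncarre} and \ref{freedmann}. The key observation, already used above, is that if $(X_t)$ has the form~(\ref{eq_Xt}) with values in $\mathcal{M}_{n_1\times n_2}$, then the dilated process $(\mathcal{H}(X_t))$ is a local martingale of the form~(\ref{eq_Xt}) taking values in the symmetric matrices of size $(n_1+n_2)\times(n_1+n_2)$, with integrands $\mathcal{H}(H_{i,s})$. So the plan is to apply Theorem~\ref{good} to $\mathcal{H}(X_t)$ and then translate the spectral and quadratic-variation quantities back to the original matrices.

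First I would record the two dictionary facts linking $X$ and $\mathcal{H}(X)$. On the spectral side, the eigenvalues of $\mathcal{H}(A)$ are $\pm\sigma_i(A)$ together with zeros, so $\lambda_{\text{max}}(\mathcal{H}(A)) = \lVert A\rVert$; thus the event $\{\lambda_{\text{max}}(\mathcal{H}(X_t))\ge u\}$ coincides with $\{\lVert X_t\rVert\ge u\}$, and likewise for the threshold $2u$. On the quadratic-variation side, using $\mathcal{H}(A)^2 = \mathrm{diag}(AA^*,A^*A)$ from~(\ref{carre}), I would compute
\[
\langle \mathcal{H}(X)\rangle_t = \int_0^t\sum_{i=1}^N \mathcal{H}(H_{i,s})^2\,ds = \begin{pmatrix}\int_0^t\sum_i H_{i,s}H_{i,s}^*\,ds & 0\\ 0 & \int_0^t\sum_i H_{i,s}^*H_{i,s}\,ds\end{pmatrix}.
\]
Since this is block diagonal, its spectral norm is the maximum of the spectral norms of the two diagonal blocks, which is precisely $\Lambda_t$ as defined in~(\ref{lambda}). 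Hence $\{\lVert\langle \mathcal{H}(X)\rangle_t\rVert\le\sigma^2\}$ is exactly $\{\Lambda_t\le\sigma^2\}$.

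With these identifications, Theorem~\ref{good} applied to $\mathcal{H}(X_t)$ (which lives in dimension $n_1+n_2$) reads
\[
\mathbb{P}\bigl(\exists t>0,\;\lVert X_t\rVert\ge 2u\;\text{and}\;\Lambda_t\le\sigma^2\bigr)\le (n_1+n_2)e^{-u^2/(2\sigma^2)}\,\mathbb{P}\bigl(\exists t>0,\;\lVert X_t\rVert\ge u\bigr),
\]
which is exactly the claimed inequality. There is essentially no obstacle here: the whole argument is a translation through the dilation, and every ingredient has already been verified in the symmetric setting. The only point requiring a little care is checking that $\mathcal{H}$ commutes with the stochastic integral, i.e.\ $\mathcal{H}(X_t)=\int_0^t\sum_i\mathcal{H}(H_{i,s})\,dB_s^i$, which holds because $\mathcal{H}$ is linear and the integrands $\mathcal{H}(H_{i,s})$ inherit progressive measurability and the required integrability; this was already noted in the proof of Theorem~\ref{noncarre}.
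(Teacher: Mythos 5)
Your proof is correct and follows exactly the route the paper indicates (the paper only sketches it, leaving details to the reader): apply Theorem \ref{good} to the Hermitian dilation $\mathcal{H}(X_t)$ in dimension $n_1+n_2$ and translate via $\lambda_{\max}(\mathcal{H}(A))=\lVert A\rVert$ and the block-diagonal identity $\lVert\langle\mathcal{H}(X)\rangle_t\rVert=\Lambda_t$. Nothing to add.
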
   

As for Theorem \ref{noncarre}, the proof consist in applying the symmetric case to $\mathcal{H}(X_{t})$ where $\mathcal{H}$ is defined by $(\ref{matrix}),$ and using the relation \begin{equation}\label{utile}\lambda_{\text{max}}(\mathcal{H}(A)) = \lVert \mathcal{H}(A) \rVert = \lVert A \rVert.\end{equation} The details are left to the reader.

\section{A Burkholder-Davis-Gundy type inequality for the spectral norm}

In this section, we prove Theorem \ref{thm_main}. The proof relies on a classical trick to derive inequalities of this type from a good-$\lambda$ inequality, as presented for instance in \cite{Ban98} or \cite[Chapter IV]{Rev99}. We include the proof for completeness.

\begin{proof}[Proof of Theorem \ref{thm_main}]
    Let $t>0$ and $X_{t}^{*} = \sup_{0\le s\le t}\lVert   X_{s}\rVert$. Let $\delta > 0$. Using $(\ref{carre})$ and the fact that $(H_{i,t})$ take values in the set of symmetric matrices, we have \begin{equation}\label{rmq}\lVert\langle \mathcal{H}(X)\rangle_{t}\rVert = \lVert\langle X\rangle_{t}\rVert.\end{equation} Then, combining Theorem \ref{good} to the symmetric matrix $\mathcal{H}(X_{s}^{t}) = \mathcal{H}(X_{s\wedge t})$ with (\ref{utile}) and $(\ref{rmq})$, we get 
    \begin{equation}\label{nvxfreedmann}\mathbb{P}(\exists s \in (0,t],\; \lVert X_{s}\rVert\ge 2u \;\; \text{and}\;\;\lVert\langle X\rangle_{s}\rVert\le\sigma^{2}) \le 2ne^{-\frac{u^{2}}{2\sigma^{2}}}\mathbb{P}(\exists s\in (0,t],\; \lVert X_{s}\rVert \ge u).\end{equation}
    Moreover, since the process $t \mapsto \lVert\langle X\rangle_{t}\rVert$ is nondecreasing, \begin{equation}\label{utilee}
        \mathbb{P}(X_{t}^{*} > 2u \;\;\text{and}\;\;\lVert\langle X\rangle_{t}\rVert^{1/2} \le \delta u) \le \mathbb{P}(\exists s \in (0,t],\;\lVert X_{s}\rVert > 2u \;\;\text{and}\;\;\lVert\langle X\rangle_{s}\rVert^{1/2} \le \delta u).
    \end{equation}

    \noindent{}By Theorem \ref{good}, $(\ref{utilee})$ and $(\ref{nvxfreedmann})$, \begin{align*}\mathbb{E}( X_{t}^{*}/2)^{p} &= p\int_{0}^{\infty}u^{p-1}\mathbb{P}(X_{t}^{*} >2u)du
    \\ &\le p\int_{0}^{\infty}u^{p-1}\mathbb{P}(X_{t}^{*}>2u, \lVert\langle X\rangle_{t}\rVert^{1/2} \le \delta u)du + p\int_{0}^{\infty}u^{p-1}\mathbb{P}(\lVert\langle X\rangle_{t}\rVert^{1/2} > \delta u)du\\ 
    &\le 2npe^{-1/2\delta^2}\int_{0}^{\infty}u^{p-1}\mathbb{P}(X_{t}^{*}>u)du + \frac{1}{\delta^{p}}\mathbb{E}[\lVert \langle X\rangle_{t}\rVert^{p/2}] \\ &= 2ne^{-1/2\delta^2}\mathbb{E}[(X_{t}^{*})^{p}] + \frac{1}{\delta^{p}}\mathbb{E}[\lVert \langle X\rangle_{t}\rVert^{p/2}].\end{align*}
Then, \begin{equation}\label{final}\mathbb{E}[(X_{t}^{*})^{p}]\left(\frac{1}{2^{p}}-2ne^{-1/2\delta^2}\right) \le \frac{1}{\delta^{p}}\mathbb{E}[\lVert \langle X\rangle_{t}\rVert^{p/2}].\end{equation}
 Taking $\delta = \frac{1}{\sqrt{2}\sqrt{(p+2)\log 2 + \log n}}$ yields $\frac{1}{2^{p}}-ne^{-1/2\delta^{2}} = 2^{-(p+1)}$. Injecting this into (\ref{final}), we obtain \begin{align*}
        \mathbb{E}(X_{t}^{*})^{p} &\le 2^{p+1}(\sqrt{2})^{p}((p+2)\log 2+\log(n))^{p/2}\mathbb{E}\lVert \langle X\rangle_{t}\rVert^{p/2}. 
    \end{align*} 
Then, \begin{align*} (\mathbb{E}(X_{t}^{*})^{p})^{1/p} &\le 2^{1+1/p}\sqrt{2}\sqrt{(p+2)\log 2 + \log n}\;(\mathbb{E}\lVert \langle X\rangle_{t}\rVert^{p/2})^{1/p}\\ & \le 12\sqrt{2\log(2)}\sqrt{p+\log n}\;(\mathbb{E}\lVert \langle X\rangle_{t}\rVert^{p/2})^{1/p}.
\qedhere \end{align*}
\end{proof}

As in Theorem \ref{th_schatten}, the symmetry assumption is not essential. Indeed, using the same notation from section 1, and applying Theorem \ref{thm_main} to $\mathcal{H}(X_{t})$. We obtain the following theorem. 
\begin{theorem}
Assuming that for all integers $i$, $(H_{i,s})_{s}$ are progressively measurable processes taking values in the space of rectangular matrices with $n_{1}$ rows and $n_{2}$ columns. Then, 
\[\left(\mathbb{E}\sup_{0\le s\le t}\lVert X_{s}\rVert^{p}\right)^{1/p} \le C\sqrt{p+2\log(n_{1}+n_{2})}\left(\mathbb{E}\;\Lambda_{t}^{p/2}\right)^{1/p}\] where $\Lambda_{t}$ is defined by $(\ref{lambda}).$ 
\end{theorem}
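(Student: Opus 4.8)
The plan is to reduce the statement to the already-established symmetric case, Theorem~\ref{thm_main}, by passing to the Hermitian dilation $\mathcal{H}$ defined in~(\ref{matrix}), exactly as was done for Theorem~\ref{noncarre} and for the rectangular Freedman inequality. First I would set $Y_{t} := \mathcal{H}(X_{t})$ and observe, using the linearity of $\mathcal{H}$ together with the linearity of the stochastic integral, that
\[Y_{t} = \int_{0}^{t}\sum_{i=1}^{N}\mathcal{H}(H_{i,s})\,dB_{s}^{i}.\]
Since each $\mathcal{H}(H_{i,s})$ is a symmetric matrix of size $(n_{1}+n_{2})\times(n_{1}+n_{2})$ and inherits progressive measurability and the integrability condition from $(H_{i,s})$, the process $(Y_{t})$ is itself of the form~(\ref{eq_Xt}), now taking values in the symmetric matrices of dimension $n_{1}+n_{2}$. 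Thus Theorem~\ref{thm_main} applies to $Y$ directly.

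The substance of the argument is to identify the two quantities appearing in Theorem~\ref{thm_main} for $Y$ with the corresponding quantities for $X$. On the left-hand side, relation~(\ref{utile}) gives $\lVert Y_{s}\rVert = \lVert\mathcal{H}(X_{s})\rVert = \lVert X_{s}\rVert$ pointwise, so that $\sup_{0\le s\le t}\lVert Y_{s}\rVert = \sup_{0\le s\le t}\lVert X_{s}\rVert$. On the right-hand side, I would compute the quadratic variation using~(\ref{carre}):
\[\langle Y\rangle_{t} = \int_{0}^{t}\sum_{i=1}^{N}\mathcal{H}(H_{i,s})^{2}\,ds = \begin{pmatrix} \int_{0}^{t}\sum_{i=1}^{N}H_{i,s}H_{i,s}^{*}\,ds & 0 \\ 0 & \int_{0}^{t}\sum_{i=1}^{N}H_{i,s}^{*}H_{i,s}\,ds \end{pmatrix},\]
a block-diagonal matrix with two positive semidefinite blocks. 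Since the spectral norm of a block-diagonal symmetric matrix is the maximum of the spectral norms of its blocks, this yields exactly $\lVert\langle Y\rangle_{t}\rVert = \Lambda_{t}$ with $\Lambda_{t}$ as in~(\ref{lambda}); this computation parallels~(\ref{b}) in the proof of Theorem~\ref{noncarre}.

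With these two identities in hand I would invoke Theorem~\ref{thm_main} for the symmetric process $Y$ of dimension $n_{1}+n_{2}$, namely
\[\left(\mathbb{E}\sup_{0\le s\le t}\lVert Y_{s}\rVert^{p}\right)^{1/p} \le C\sqrt{p+\log(n_{1}+n_{2})}\,\left(\mathbb{E}\lVert\langle Y\rangle_{t}\rVert^{p/2}\right)^{1/p},\]
and substitute $\sup_{s}\lVert Y_{s}\rVert = \sup_{s}\lVert X_{s}\rVert$ and $\lVert\langle Y\rangle_{t}\rVert = \Lambda_{t}$. This already gives the conclusion with the sharper factor $\sqrt{p+\log(n_{1}+n_{2})}$; the stated bound with $\sqrt{p+2\log(n_{1}+n_{2})}$ then follows immediately, since $\log(n_{1}+n_{2})\le 2\log(n_{1}+n_{2})$. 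I do not expect any genuine obstacle here: the whole argument is a transfer through the dilation, and the only step requiring a moment of care is the block-diagonal norm identification $\lVert\langle Y\rangle_{t}\rVert = \Lambda_{t}$. The remaining details are routine and may reasonably be left to the reader, as the paper does for the rectangular Freedman inequality.
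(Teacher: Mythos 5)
Your proof is correct and follows exactly the route the paper intends (and leaves to the reader): apply Theorem~\ref{thm_main} to the Hermitian dilation $\mathcal{H}(X_t)$, identify $\lVert\mathcal{H}(X_s)\rVert=\lVert X_s\rVert$ via~(\ref{utile}), and identify $\lVert\langle\mathcal{H}(X)\rangle_t\rVert=\Lambda_t$ via the block-diagonal formula~(\ref{carre}). Your observation that this actually yields the sharper factor $\sqrt{p+\log(n_1+n_2)}$, from which the stated bound follows since $n_1+n_2\ge 2$, is a valid bonus.
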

The proof is left to the reader, as it follows similar arguments to those of Theorem \ref{noncarre}.


\begin{thebibliography}{9}
\bibitem[Bac18]{Bac18}\textit{E. Bacry}, et al., Concentration Inequalities For Matrix Martingales In Continuous Time. Probab. Theory Relat. Fields 170, No. 1--2, 525--553 (2018)
\bibitem[Ban16]{Ban16}\textit{A. S. Bandeira} and \textit{R. van Handel}, Sharp Nonasymptotic Bounds On The Norm Of Random Matrices With Independant Entries. Ann. Probab. 44, No. 4, 2479--2506 (2016)
\bibitem[Ban98]{Ban98}\textit{R. Banuelos}, A sharp good $\lambda$ inequality with an application to Riesz transforms.
Michigan Mathematical Journal. 35, no. 1, 117--125 (1988)
\bibitem[Bar05]{Bar05}\textit{B. Simon}, Trace ideals and their applications. 2nd ed. Providence, RI: American Mathematical Society (AMS) (2005)
\bibitem[Bha97]{Bha97}
R. Bhatia.
\emph{Matrix Analysis},
Graduate Texts in Mathematics, vol. 169, 
Springer-Verlag, New York, 1997.   
\bibitem[Bia98]{Bia98}\textit{P. Biane} and \textit{R. Speicher}, Stochastic calculus with respect to free Brownian motion and analysis on Wigner space. Probability theory and related fields, 11 and , 373-409 (1998)
\bibitem[Fre75]{Fre75}\textit{D. A. Freedman}, On Tail Probabilities for Martingales. Ann. Probab. 3, 100--118 (1975)
\bibitem[Jun03]{Jun03} \textit{M. Junge} and \textit{Q. Xu},
Noncommutative Burkholder/Rosenthal inequalities.
Ann. Proba. 31, no.~2, 948--995 (2003)
\bibitem[Kla24]{Kla24}\textit{B. Klartag} and \textit{J. Lehec}, Isoperimetric inequalities in high-dimensional convex sets, Preprint, arXiv:2406.01324 [math.FA] (2024)
\bibitem[Kar91]{Kar91}\textit{I. Karatzas} and \textit{S. E. Shreve}, Brownian motion and stochastic calculus. 2nd ed. New York etc.: Springer-Verlag (1991)
\bibitem[Led01]{Led01}\textit{M. Ledoux}, The concentration of measure phenomenon. Providence, RI: American Mathematical Society (AMS) (2001)
\bibitem[Lus91]{Lus91}\textit{F. Lust-Piquard} and \textit{G. Pisier}, Non Commutative Khintchine and Paley Inequalities. Ark. Mat. 29, No. 2, 241--260 (1991)
\bibitem[Pis97]{Pis97}\textit{G. Pisier} and \textit{Q. Xu}, Non-commutative martingale inequalities. Communications in mathematical physics, 189, 667-698 (1997)
\bibitem[Pis03]{Pis03}\textit{G. Pisier}, Introduction to operator space theory. Cambridge: Cambridge University Press (2003)
\bibitem[Rev99]{Rev99}\textit{D. Revuz} and \textit{M. Yor},
Continuous martingales and Brownian motion.
3rd edition, Grundlehren der mathematischen Wissenschaften. Springer, Berlin, (1999)
\bibitem[Rud99]{Rud99}\textit{M. Rudelson}, Random Vectors in the Isotropic Position. J. Funct. Anal. 164, No. 1, 60--72 (1999)
\bibitem[Tro11]{Tro11}\textit{J. A. Tropp}, Freedman's Inequality For Matrix Martingales. Electron. Commun. Probab. 16, 262--270 (2011)
\bibitem[Tro15]{Tro15}\textit{J. A. Tropp}, An Introduction to Matrix Concentration Inequalities. Found. Trends Mach. Learn. 8, No. 1--2, 1--230 (2015)
\bibitem[Tro16]{Tro16}\textit{J. A. Tropp}, Second-Order Matrix Concentration Inequalities. Appl. Comput. Harmon. Anal. 44, No. 3, 700--736 (2018)
\bibitem[Ver18]{Ver18}\textit{R. Vershynin}, High-dimensional probability. An introduction with applications in data science. Cambridge: Cambridge University Press (2018)


\end{thebibliography}
\end{document}